\newtheorem{thm}{Theorem}
\newtheorem{lem}[thm]{Lemma}
\def\bE{{\mathbb E}}
\def\bP{{\mathbb P}}
\def\int{{\text{int}}}
\begin{document}

\title[On curves intersecting at most once]%
{On curves intersecting at most once}

%\abstract{sdf}

\author[Joshua Evan Greene]{Joshua Evan Greene}

\address{Department of Mathematics, Boston College\\ Chestnut Hill, MA 02467}

\email{joshua.greene@bc.edu}

%\thanks{Supported by NSF CAREER Award DMS-1455132.}

\maketitle

%\[
%\begin{array}{rl}

%\text{email:}& {\tt joshua.greene@bc.edu} \cr

%\text{phone:}& 410-570-2782 \\

%\text{address:}& \text{Department of Mathematics} \\
%& \text{301 Carney Hall} \\
%& \text{Boston College} \\
%& \text{Chestnut Hill, MA 02467-3806}

%\end{array}
%\]

\medskip

\noindent {\bf Abstract.} 
We prove that on a closed surface of genus $g$, the cardinality of a set of simple closed curves in which any two are non-homotopic and intersect at most once is $\lesssim g^2 \log(g)$.
This bound matches the largest known constructions to within a logarithmic factor.
The proof uses a probabilistic argument in graph theory.
It generalizes as well to the case of curves that intersect at most $k$ times in pairs.
%We prove that on a surface of genus $g$ and Euler characteristic $\chi$, the maximum cardinality of a set of simple closed curves in which any two are non-homotopic and intersect at most once is $\lesssim g^{1/2} \cdot |\chi|^2+|\chi|$.
%Let $S$ be a connected oriented surface with genus $g \ge 1$ and Euler characteristic $\chi < 0$.
%We prove that the maximum cardinality of a set of nonperipheral essential simple closed curves on $S$ that are pairwise non-homotopic and intersecting at most once is $\lesssim g^{1/2} \cdot |\chi|^2$.

\vspace{.1in}

%\noindent
%{\bf MSC classes:} 05C35, 52C45, 57M15

\section{Introduction.}

%Juvan, Malni{\v c}, and Mohar raised and studied the following problem at the intersection of extremal combinatorics and low-dimensional topology.
Let $S$ be a connected, oriented surface of finite type, and let $k$ be a non-negative integer.
A {\em $k$-system} of curves on $S$ is a collection of nonperipheral, essential, simple closed curves on $S$ such that any two are non-homotopic and intersect in at most $k$ points.
%Several researchers have addressed the problem of estimating the size of the largest $k$-system on $S$ \cite{abg2017, jmm96, mrt14, przytycki2015}.
Juvan, Malni{\v c}, and Mohar raised the problem of estimating the size of the largest $k$-system on $S$ and proved that it is finite for any $S$ and $k$ \cite[Theorem 3.3]{jmm96}.
%Malestein, Rivin, and Theran obtained improved estimates on the size of a $k$-system, which were later refined by Przytycki and Aougab, Biringer, and Gaster \cite{abg2017, mrt14, przytycki2015}.
The case that $S = S_g$ is the closed surface of genus $g$ and $k=1$ has drawn particular interest, since it already demonstrates the difficulty of the problem: indeed, it remains unsolved.
Different authors have described constructions of 1-systems on $S_g$ whose size grows as a quadratic function of the genus $g$ \cite[Theorem 1.2]{aougab14},
\cite[Theorem 1]{mrt14}, but no larger constructions are known, and the best known upper bounds grow faster than a quadratic function of $g$.
%Malestein, Rivin, and Theran constructed a 1-system of size $g^2 + (5/2)g$ \cite[Theorem 1]{mrt14}.
%Apart from a trivial enlargement to $g^2 + (7/2)g$, it remains the best construction known.

P. Przytycki made dramatic progress on the problem by giving an exact answer to the corresponding problem for arcs.
In this variation on the problem, $S$ has punctures, and simple closed curves are replaced by simple arcs that limit to punctures at their ends.
Przytycki showed that if $S$ has Euler characteristic $\chi < 0$, then the maximum cardinality of a 1-system of arcs on $S$ is exactly $2|\chi|(|\chi|+1)$\cite[Theorem 1]{przytycki2015}.
The proof is an elegant argument in hyperbolic geometry.
He used this bound to show that if $\Gamma$ is a 1-system of curves on $S_g$, then $|\Gamma| \lesssim g^3$.
(For a function $B$ of several variables, the notation $A \lesssim B$ means that $A \le C \cdot B$ for some absolute constant $C$.)
%$\lesssim g|\chi|^2+|\chi|$ on the size of a 1-system of curves 
The argument is inductive:
one selects any curve and argues that it meets $\lesssim |\chi|^2$ other curves, using the bound on arcs;
cutting the surface along the first curve and discarding $\lesssim |\chi|^2$ curves, one gets a 1-system on a surface of lower genus, and its size is bounded by induction \cite[Theorem 4]{przytycki2015}.
Aougab, Biringer, and Gaster improved the upper bound to $\lesssim g^3/(\log g)^2$ using deeper tools from hyperbolic geometry \cite[Theorem 1.1]{abg2017}; again, their argument relies on Przytycki's bound for arcs.

Our main result is an upper bound on the size of a 1-system of curves which comes to within a logarithmic factor of the order of growth of the largest known constructions:

\begin{thm}
\label{thm: main}
A 1-system of curves $\Gamma$ on $S_g$ has size $|\Gamma| \lesssim g^2 \log(g)$.
\end{thm}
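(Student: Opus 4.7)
My plan is to combine Przytycki's bound on arc $1$-systems with a graph-theoretic probabilistic argument that controls how often a single arc homotopy class is realized by distinct curves of $\Gamma$ after an appropriate cut. I would first choose a maximal subset $A \subseteq \Gamma$ of pairwise disjoint curves. The pants decomposition bound gives $|A| \le 3g-3$, and maximality forces every $c \in \Gamma \setminus A$ to meet $\bigcup A$. Cutting $S_g$ along $A$ produces a (possibly disconnected) surface $S'$ with $\chi(S') = 2-2g$. Each $c \in \Gamma \setminus A$ then decomposes into $i(c, \bigcup A) \ge 1$ proper arcs on $S'$, and the $1$-system hypothesis on $\Gamma$ ensures that these arcs collectively form a $1$-system of arcs on $S'$.

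By Przytycki's arc estimate applied to $S'$, the number of homotopy classes of arcs in such a $1$-system is $\lesssim g^2$. Because each $c \in \Gamma \setminus A$ contributes at least one arc, $|\Gamma \setminus A|$ is bounded by the total arc count, so it suffices to show that each arc homotopy class $[\alpha]$ is realized by at most $O(\log g)$ arcs coming from $\Gamma$. The easy case is handled by a Dehn-twist analysis: if $c_1, c_2 \in \Gamma$ each meet $A$ exactly once and produce isotopic arcs on $S'$, then $c_1$ and $c_2$ differ by a Dehn twist along the flanking curve $a \in A$; the Dehn twist intersection formula combined with the constraint $i(c_1, c_2) \le 1$ forces the admissible twist parameters to lie in an interval of length at most $1$ in $\mathbb{Z}$, bounding the multiplicity by $2$ in this regime.

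The main obstacle is to control the multiplicity when the contributing curves cross $A$ many times, since the naive Dehn-twist rigidity breaks down and a single arc class can a priori be produced by many curves. This is where I expect the probabilistic argument on the intersection graph $G$ of $\Gamma$ to enter. A natural approach is to stratify $\Gamma \setminus A$ dyadically by the crossing number $i(c, \bigcup A)$---there are only $O(\log g)$ relevant scales, since any such crossing number is bounded by the total arc count $\lesssim g^2$---and, inside each stratum, randomly refine $A$ (or randomize over independent sets of $G$) so that with positive probability the cut decouples each arc class in the stratum into $O(1)$ contributions. A union bound over the $O(\log g)$ strata then yields the desired multiplicity $O(\log g)$, and combining with the bound of $\lesssim g^2$ classes and $|A| \lesssim g$ gives $|\Gamma| \lesssim g^2 \log g$. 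The decisive quantitative challenge is calibrating this randomization so that the $1$-system rigidity is exploited uniformly across every stratum, keeping the per-class multiplicity logarithmic in $g$ rather than polynomial.
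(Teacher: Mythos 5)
Your outline correctly isolates the three ingredients that drive the result — Przytycki's arc bound, a Dehn-twist rigidity argument, and a probabilistic selection — but the way you assemble them has a genuine gap, and the assembly is in fact where all the work lies. The paper does not fix a maximal disjoint collection $A$ and then try to control how many curves project to the same arc class after cutting along $A$. Instead, it inverts the logic: for an \emph{arbitrary} subset $\Gamma' \subset \Gamma$, let $U(\Gamma')$ be the set of curves meeting exactly one element of $\Gamma'$. Each such curve meets its unique partner in $\Gamma'$ exactly once (this is where the 1-system hypothesis enters), so after resolving crossings inside $\Gamma'$ and cutting along the relevant resolved curves, every curve of $U(\Gamma')$ opens into a single arc on a connected surface of the same Euler characteristic, and the Dehn-twist argument caps the multiplicity at 2. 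This gives a \emph{deterministic} bound $|U(\Gamma')| \lesssim g^2$ valid for every $\Gamma'$. The probabilistic step is then a purely graph-theoretic existence statement: in any graph with $n$ vertices, no isolated vertices, and maximum degree $\Delta$, there is a vertex subset $V'$ with $|U(V')| \gtrsim n/\log\Delta$; this is proved by including each vertex in $V'$ independently with probability $p = 2^{-j}$, where $j$ is chosen by a dyadic pigeonhole on the degree sequence. Combining the two bounds on $\Upsilon(G(\Gamma)) = \max_{\Gamma'} |U(\Gamma')|$ gives $|\Gamma|/\log g \lesssim g^2$.

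Your proposal instead cuts once along a fixed pants-type set $A$ and then must control, per arc homotopy class on $S'$, how many curves of $\Gamma$ realize it. You correctly flag this multiplicity control as the crux, but the mechanism you propose — dyadic stratification by $i(c,\bigcup A)$ followed by ``randomly refining $A$'' so that ``the cut decouples each arc class into $O(1)$ contributions'' — is not substantiated and does not obviously close. Several concrete obstructions: a curve $c$ with $i(c,\bigcup A) = m$ contributes $m$ arcs, not one, so the Dehn-twist rigidity (which requires the curve to meet the cut system exactly once) simply does not apply to it; ``refining'' $A$ to a subset destroys the property that every curve meets $A$, so curves drop out of the count rather than becoming controllable; and there is no mechanism offered for why a random sub-cut should reduce per-class multiplicity to $O(1)$. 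The paper avoids all of this by never demanding that the cut system be a maximal disjoint collection, and by engineering $\Gamma'$ so that the curves one counts meet the cut system exactly once \emph{by definition of} $U(\Gamma')$. The missing idea in your proposal is precisely the quantity $\Upsilon(G)$ and the graph-theoretic lower bound $\Upsilon(G) \gtrsim n/\log\Delta$ — that is where the logarithm actually comes from, not from a union bound over crossing-number scales.
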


%Here $e = 2.718\dots$ denotes the base of the natural logarithm.
The proof of Theorem \ref{thm: main} draws inspiration from the proof of \cite[Theorem 4]{przytycki2015} described above, but it takes on a different character.
%Our original idea was to cut the surface along a collection of several curves $\Gamma' \subset \Gamma$ simultaneously and to discard the curves that meet them: if we can get the genus to drop significantly while only losing few curves, then we can get an improved bound on the size of $\Gamma$.
%In order to execute this idea, we need some control over the number of curves that meet $\Gamma'$.
%We would like to do so by bounding them in terms of the number of arcs that appear on the cut surface.
%However, we lose control
%Rather than cut the surface along one curve at a time, we may look to cut the surface along a subset $\Gamma' \subset \Gamma$ of several curves simultaneously and discard the curves in $\Gamma$ that meet $\Gamma'$.
%However, it is difficult to choose $\Gamma'$ judiciously and also to obtain a good estimate how many curves of $\Gamma$ intersect $\Gamma'$ non-trivially.
%This is where our proof strategy diverges.
We show that for any subset of curves $\Gamma' \subset \Gamma$, the number of curves in $\Gamma$ that intersect a unique curve in $\Gamma'$ is 
small (Lemma \ref{lem: bound}).
On the other hand, we prove the existence of a subset of curves $\Gamma'$ such that a large proportion of the curves in $\Gamma$ intersects a unique 
curve in $\Gamma'$ (Theorem \ref{thm: graph}).
The existence argument is probabilistic and graph theoretic in nature.
Together, Lemma \ref{lem: bound} and Theorem \ref{thm: graph} lead at once to the bound in Theorem \ref{thm: main}.
%We obtain these curves by first randomly selecting a subset of $\Gamma$ and then making a minor modification.
%This allows us to cut open the surface along these curves and apply \cite[Theorem 1]{przytycki2015}.

We adapt our method to bound the cardinality of a $k$-system of curves in Theorem \ref{thm: main 2}, applying an idea from \cite{abg2017}.
On the other hand, we present a contrast to Theorem \ref{thm: graph} in Theorem \ref{thm: graph 2} that indicates the limitation to our approach of bounding the size of a $k$-system of curves in terms of corresponding bounds on the size of a $k$-system of arcs.

\section{The main argument.}

We begin by preparing some graph theoretic notation.
Let $G = (V,E)$ denote a finite, simple graph.
For a vertex $v \in V$, let $d(v)$ denote its degree.
An isolated vertex is a vertex of degree 0.
Let $\Delta(G) = \max \{ d(v) \, | \,  v \in V \}$ denote the maximum vertex degree in $G$.
We require a generalization of the quantity $\Delta(G)$.
For a subset $V' \subset V$, let $U(V')$ denote the subset of vertices in $G$ with a unique neighbor in $V'$.
Thus, when $V'$ consists of a single vertex $v$, we have $|U(V')| = d(v)$.
Let $\Upsilon(G) = \max \{ |U(V')| \, | \, V' \subset V \}$.
Trivially, $\Upsilon(G) \ge \Delta(G)$.

For a finite collection of curves $\Gamma$ on $S_g$, let $G(\Gamma)$ denote the {\em intersection graph} of $\Gamma$:
this is the finite, simple graph with vertex set $V = \Gamma$ and edge set $E$ consisting of pairs of curves in $\Gamma$ that intersect non-trivially.

The following result generalizes the estimate $\Delta(G(\Gamma)) \lesssim g^2$ contained in the proof of \cite[Theorem 4]{przytycki2015}:

\begin{lem}
\label{lem: bound}
If $\Gamma$ is a 1-system on $S_g$, then $\Upsilon(G(\Gamma)) \lesssim g^2$.
\end{lem}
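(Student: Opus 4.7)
My plan is to generalize Przytycki's cutting-and-counting argument that bounds $\Delta(G(\Gamma)) \lesssim g^2$ in the proof of \cite[Theorem 4]{przytycki2015}. That argument fixes a curve $\alpha \in \Gamma$, cuts $S_g$ along $\alpha$, and observes that the curves in $\Gamma$ meeting $\alpha$ become arcs of a $1$-system in $S_g \setminus \alpha$; Przytycki's $1$-system arc bound \cite[Theorem 1]{przytycki2015} then gives at most $2|\chi|(|\chi|+1) \lesssim g^2$ of them.

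For $\Upsilon$, I would fix $\Gamma' \subset \Gamma$ and set $U = U(\Gamma')$. Each $\gamma \in U$ meets $\Gamma'$ transversally at exactly one point $p_\gamma$, lying on a unique curve $\alpha_\gamma \in \Gamma'$, and is disjoint from $\Gamma' \setminus \{\alpha_\gamma\}$. View $\Gamma'$ as a $4$-valent graph $H$ embedded in $S_g$ with vertices at its pairwise intersections, and cut $S_g$ along $H$ to obtain a surface $S^*$ with boundary. Each $\gamma \in U$ then restricts to a simple essential arc $\tilde{\gamma}$ in $S^*$ with both endpoints on $\partial S^*$, and the family $\{\tilde{\gamma} : \gamma \in U\}$ is a $1$-system of arcs since the underlying curves in $\Gamma$ are. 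Przytycki's arc bound yields $|U| \le 2|\chi(S^*)|(|\chi(S^*)|+1)$.

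The main obstacle is bounding $|\chi(S^*)|$ by $O(g)$: a direct calculation gives $|\chi(S^*)| = 2g - 2 + I$, where $I$ is the total number of pairwise intersections within $\Gamma'$, and $I$ need not be bounded by $g$. To cope, I would first reduce to the case in which every $\alpha \in \Gamma'$ has nonempty $U_\alpha = \{\gamma \in U : \gamma \cap \alpha \neq \emptyset\}$, since discarding $\alpha$ with $U_\alpha = \emptyset$ from $\Gamma'$ can only enlarge $U(\Gamma')$. I would then select a maximal pairwise disjoint subcollection $\Gamma'_d \subset \Gamma'$, which is a multicurve with $|\Gamma'_d| \le 3g - 3$. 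The subset of $U$ whose $\alpha_\gamma$ lies in $\Gamma'_d$ lies inside $U(\Gamma'_d)$, and since cutting $S_g$ along a multicurve preserves $|\chi| = 2g - 2$, applying Przytycki's arc bound in this cut surface accounts for $\lesssim g^2$ such curves. The remaining $\gamma \in U$ are disjoint from $\Gamma'_d$ and have $\alpha_\gamma \in \Gamma' \setminus \Gamma'_d$, and I would handle them by iterating the argument inside the cut surface $S_g \setminus \Gamma'_d$, whose complexity is no larger than $g$.

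The step I expect to be hardest is this iteration: the curves in $\Gamma' \setminus \Gamma'_d$ become arcs rather than closed curves in $S_g \setminus \Gamma'_d$, so the recursion requires a variant of the statement phrased for surfaces with boundary in which the ``unique neighbor'' may be an arc, and one must control the constants to avoid accumulation across iterations.
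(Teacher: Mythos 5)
You have correctly identified the central difficulty---that cutting along all of $\Gamma'$ inflates $|\chi|$ by the number of self-intersections within $\Gamma'$---but the proposed workaround is left as an acknowledged gap, and it is not clear it can be filled without losing a polylogarithmic factor. After cutting along the maximal disjoint subcollection $\Gamma'_d$, the curves of $\Gamma' \setminus \Gamma'_d$ become arcs in a surface with boundary, and the remaining $\gamma$ with $\alpha_\gamma \notin \Gamma'_d$ have their unique neighbor among these arcs. You would need a version of the lemma in which ``unique neighbor'' refers to an arc, and you would need the iteration to terminate with the bounds summing to $O(g^2)$ rather than accumulating; neither is established. In particular, $|\chi|$ of the cut surface does not decrease under the iteration, so the per-step cost of $\lesssim g^2$ does not obviously sum favorably.

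The paper avoids this entirely with a single move you did not consider: rather than cutting along the $4$-valent graph $H$ or along a disjoint subcollection of $\Gamma'$, \emph{resolve} the crossings of $\Gamma'$ (replace each transverse double point by a pair of disjoint strands) to obtain a collection of pairwise disjoint simple closed curves. The crucial point is that this resolution preserves the unique-neighbor structure: if $\gamma \in U(\Gamma')$ meets $\alpha_\gamma$ in a single point $p$ and is disjoint from the rest of $\Gamma'$, then after resolution $\gamma$ still meets exactly one resolved curve, at $p$, and is disjoint from the others. Cutting along the subset $\Gamma''$ of resolved curves meeting some $\gamma \in U(\Gamma')$ then yields a \emph{connected} surface (each such curve is non-separating, witnessed by its $\gamma$) with $|\chi| = 2g-2$, and the curves of $U(\Gamma')$ cut open to arcs. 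This is the insight that makes the argument one-shot rather than recursive. One further point your proposal glosses over: distinct curves in $U(\Gamma')$ can cut open to homotopic arcs, so the arcs do not directly form a $1$-system; the paper observes that two such curves must differ by a Dehn twist about the resolved curve both meet, so at most two curves share an arc type, giving $|U(\Gamma')| \le 2|A|$ before Przytycki's bound is applied.
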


\begin{proof}
Let $\Gamma' \subset \Gamma$.
Resolve the crossings between the curves in $\Gamma'$ to obtain a set of pairwise disjoint curves on $S$.
Let $\Gamma''$ denote the subset of the resolved curves that contain a point of intersection with a curve in $U(\Gamma')$.
%The curves in $\Gamma'$ are homologically independent, since for each curve in $W_0$, we can locate a curve in $U$ that intersects it but no other curve in $W_0$.
%Therefore, cutting $S$ along the curves in $W_0$ results in a connected surface $S'$ of Euler characteristic $\chi = 2-2g$.
Thus, each curve in $\Gamma''$ meets some curve $\gamma \in U(\Gamma')$ in a single point, and $\gamma$ meets no other curve in $\Gamma''$.
It follows that cutting $S$ along the curves in $\Gamma''$ results in a {\em connected} surface $S'$ of Euler characteristic $\chi = 2-2g$.
Each curve in $U(\Gamma')$ cuts open to an arc on $S'$.
As in the proof of \cite[Theorem 4]{przytycki2015}, any two curves that cut open to the same homotopy type of arc on $S'$ are related by a Dehn twist about the unique curve in $\Gamma''$ that both intersect.
It follows that no more than two curves of $U(\Gamma')$ can cut open to the same homotopy type of arc on $S'$.
Discarding duplicates,  we thereby obtain a 1-system $A$ of at least $|U(\Gamma')|/2$ arcs on $S'$.
On the other hand, $|A| \le 2|\chi|(|\chi|+1) \lesssim g^2$ by \cite[Theorem 1]{przytycki2015}.
Therefore, $|U(\Gamma')| \lesssim g^2$, leading to the desired bound.
\end{proof}

%For a vertex $v \in V$, let $d(v)$ denote its {\em degree}, i.e. the number of $w \in V$ with the property that $(v,w) \in E$.
%Let $d = (1/ |V|) \sum_{v \in V} d(v)$ denote the {\em average degree} and $\delta = \min_{v \in V} d(v)$ the {\em minimum degree} of $G$.

By contrast, the following estimate on $\Upsilon(G)$ holds for an arbitrary simple graph $G$:

\begin{thm}
\label{thm: graph}
If $G=(V,E)$ is a simple graph with $n$ vertices, no isolated vertices, and maximum degree $\Delta$, then $\Upsilon(G) \gtrsim n / \log \Delta$.
%In a simple graph $G=(V,E)$ with $n$ vertices and maximum degree $\Delta$, there exists a subset $V' \subset V$ with the property that $|U(V')| \gtrsim n / \log \Delta$.
\end{thm}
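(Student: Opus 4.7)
\emph{Proof proposal.} The plan is a probabilistic argument in which the sampling density is tuned to match a typical degree in $G$. The naive attempt of choosing each vertex independently with probability $p = 1/\Delta$ gives, for every vertex $v$, a probability $d(v) \cdot p \cdot (1-p)^{d(v)-1} \ge d(v)/(e\Delta)$ of having a unique neighbor in the sampled set $V'$; summing over $v$ only yields $\bE|U(V')| \gtrsim n/\Delta$, which is too weak. To reach the $\log \Delta$ bound one must exploit that low-degree vertices benefit from a larger sampling probability, so I would bin the vertices by degree and pick the best bin.

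Concretely, partition $V$ into the dyadic classes $V_i := \{v \in V : 2^i \le d(v) < 2^{i+1}\}$ for $0 \le i \le \lceil \log_2 \Delta \rceil$. Since $G$ has no isolated vertices, these at most $1 + \lceil \log_2 \Delta \rceil$ classes cover $V$, so by the pigeonhole principle some class $V_{i^*}$ has size $\gtrsim n/\log \Delta$. Now form a random $V' \subset V$ by including each vertex independently with probability $p := 2^{-(i^*+1)}$. For $v \in V_{i^*}$, writing $d = d(v)$, the probability that $v$ has a unique neighbor in $V'$ equals $d \cdot p \cdot (1-p)^{d-1}$; the elementary estimates $d\,p \in [\tfrac{1}{2}, 1)$ and $(1-p)^{d-1} \ge (1 - 1/2^{i^*+1})^{2^{i^*+1}-1} \ge 1/e$ show that this is bounded below by the universal constant $1/(2e)$. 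Taking expectation and restricting attention to $V_{i^*}$ yields $\bE|U(V') \cap V_{i^*}| \ge |V_{i^*}|/(2e) \gtrsim n/\log \Delta$, so some realization of $V'$ achieves $|U(V')| \gtrsim n/\log \Delta$.

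I do not anticipate a substantive obstacle beyond bookkeeping: the only things to monitor are the standard inequality $(1-1/m)^{m-1} \ge 1/e$ and the degenerate boundary cases where $\log \Delta$ is zero or undefined. If $\Delta = 0$, the hypothesis of no isolated vertices forces $V = \emptyset$; if $\Delta = 1$, the graph is a disjoint union of edges and a transversal $V'$ containing one endpoint of each edge achieves $|U(V')| = n/2$. What makes the argument work, and where the real content lies, is the simple observation that once one is free to choose which degree scale to target, one pays only a logarithmic pigeonhole factor rather than the full factor of $\Delta$ incurred by a single global sampling probability.
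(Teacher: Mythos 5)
Your proof is correct and follows essentially the same strategy as the paper's: random sampling with a single probability $p$, dyadic binning of vertices by degree, pigeonhole to find a large bin, and tuning $p$ to that bin's degree scale. The only differences are cosmetic—you use disjoint bins $[2^i, 2^{i+1})$ and the direct estimate $(1-1/m)^{m-1} \ge 1/e$ (obtaining the constant $1/(2e)$), whereas the paper uses overlapping bins $[2^{j-1}, 2^j]$ and bounds $dp(1-p)^{d-1} \ge \tfrac12 dp\, 4^{-dp}$ via $(1-1/x)^x \ge 1/4$ together with a concavity argument on $[1/2,1]$ (obtaining the constant $1/8$).
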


%The proof is a simple first moment argument.
%We select vertices for the subset $V'$ at random with probability $p$ apiece, and we argue that for an appropriate probability distribution, the expected size of $U(V')$ is large.
%The probability distrib

\begin{proof}
Form a subset $V' \subset V$ by selecting each vertex from $V$ at random, independently, with probability $p$ apiece.
The expected number of neighbors that $v \in V$ has in $V'$ is $p \cdot d(v)$, which is $\sim 1$ if $p \sim 1 / d(v)$.
Therefore, if we tune the parameter $p$ to equal $1/d$, where $d(v) \sim d$ for many $v \in V$, then many of these vertices will have a unique neighbor in $V_0$.

We now rigorize this heuristic.
%To rigorize this heuristic, f
%Let $X$ denote the random variable that counts the number of vertices with a unique neighbor in $V'$.
%This random variable 
The random variable $|U(V')|$ is the sum of random variables $U_v$, $v \in V$, where each $U_v$ denotes the indicator random variable of the event $A_v$ that $v$ has a unique neighbor in $V'$.
%For a vertex $v \in V$, let $A_v$ denote the event that $v$ has a unique neighbor in $V'$, and let $X_v$ denote the indicator random variable of the event $A_v$.
Let $\bE(\cdot)$ denote the expectation of a random variable and $\bP(\cdot)$ the probability of an event.
We have
\[
\bE(U_v) = \bP(A_v) = d(v) p (1-p)^{d(v)-1},
\]
so by linearity of expectation, we have
\[
\bE(|U(V')|) = \sum_{v \in V} d(v) p (1-p)^{d(v)-1}.
\]

We wish to select $p$ so as to make $\bE(|U(V')|)$ large.
We do by an application of the dyadic pigeonhole principle.
Let $V_j = \{ v \in V \, | \, 2^{j-1} \le \deg(v) \le 2^j \}$, $j=1,\dots, \lfloor \log_2 \Delta \rfloor$.
%Here the notation $\sim$ denotes equality to within a factor of $\sqrt{2}$.
As there are no isolated vertices in $G$, the union of the sets $V_j$ is all of $V$, so there exists a value $j$ for which $|V_j| \ge n / \log_2 \Delta$.
Set $p = 1 / 2^j$.
Using the estimate $(1-1/x)^x \ge 1/4$, valid for all $x \ge 2$, it follows that $d p (1-p)^{d-1} \ge \frac 12 dp 4^{-dp}$ for all $d \ge 1$.
Moreover, the function $ \frac 12 y 4^{-y}$ is concave-down for $y \in [1/2,1]$, so it is bounded below on this interval by the value that it takes at endpoints, which is $1/8$.
%Moreover, the function $ \frac 12 dp 4^{-dp}$ is concave-down for $d \in [2^{j-1},2^j]$, so it is bounded below on this interval by the value that it takes at endpoints, which is $1/8$.
%we have $dp \in [1/2,1]$, for which we can check that $\frac 12 dp 4^{-dp} \ge 1/8$.
%Moreover, for $d \in [2^{j-1},2^j]$, we have $dp \in [1/2,1]$, for which we can check that $\frac 12 dp 4^{-dp} \ge 1/8$.
%With this choice of $p$, the function $f(d) = d p (1-p)^{d-1}$ is concave down on the interval $[2^{j-1/2},2^{j+1/2}]$.
%It attains its minimum value at the right endpoint of the interval, where it exceeds $1/3$.
Consequently, at least $n / \log_2 \Delta$ of the terms in the summation for $\bE(|U(V')|)$ are $\ge 1/8$, so $\bE(|U(V')|) \ge n / (8 \log_2 \Delta)$.
Thus, there exists a subset $V' \subset V$ for which $|U(V')| \ge n / ( 8 \log_2 \Delta)$, as desired.
%which gives the desired result.
\end{proof}

\begin{proof}
[Proof of Theorem \ref{thm: main}]
If $\Gamma$ contains a curve disjoint from the rest, then the result follows by an easy induction on the genus $g$.
Otherwise, apply Lemma \ref{lem: bound} and Theorem \ref{thm: graph} to $G(\Gamma)$ to obtain $|\Gamma| / \log g \lesssim n / \log \Delta(G(\Gamma))$ $\lesssim \Upsilon(G(\Gamma)) \lesssim g^2.$
%\]
%The desired bound follows immediately.
%Simply combine the bounds on $\Upsilon(G(\Gamma))$ stemming from Lemma \ref{lem: bound} and Theorem \ref{thm: graph}.
\end{proof}

% we need to say what $\sim$ means here

\section{Extending the argument.}

Theorem \ref{thm: main} admits a straightforward generalization to $k$-systems:

\begin{thm}
\label{thm: main 2}
If $\Gamma$ is a $k$-system of curves on $S_g$, then $|\Gamma| \lesssim_k  g^{3k-1} \log (g)$. \qed
\end{thm}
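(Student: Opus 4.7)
My plan is to parallel the proof of Theorem \ref{thm: main}, combining a $k$-analog of Lemma \ref{lem: bound} with Theorem \ref{thm: graph}. The central task is to show that any $k$-system $\Gamma$ on $S_g$ satisfies $\Upsilon(G(\Gamma)) \lesssim_k g^{3k-1}$; the target bound then follows by plugging into Theorem \ref{thm: graph} exactly as in the proof of Theorem \ref{thm: main}.

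For the $\Upsilon$-bound I would imitate the cut-and-arc strategy of Lemma \ref{lem: bound}. Given $\Gamma' \subset \Gamma$, resolve the crossings of $\Gamma'$ to obtain a collection of pairwise disjoint simple closed curves, let $\Gamma''$ consist of those components that meet some curve in $U(\Gamma')$, and cut $S$ along $\Gamma''$ to obtain a surface $S'$ with $|\chi(S')| = 2g-2$. Each $\gamma \in U(\Gamma')$ meets its unique neighbor in $\Gamma'$ in at most $k$ points, so $\gamma$ cuts open into at most $k$ arcs on $S'$, and the totality of these arcs, ranging over $\gamma \in U(\Gamma')$, forms a $k$-system of arcs on $S'$. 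Here one replaces Przytycki's $1$-system arc bound by the corresponding polynomial-in-$|\chi|$ bound for $k$-systems of arcs; this is where the idea from \cite{abg2017} enters.

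The novel feature for $k \ge 2$ is that the map from $U(\Gamma')$ to sets of arc homotopy classes on $S'$ is no longer almost-injective. If two curves $\gamma_1, \gamma_2 \in U(\Gamma')$ yield the same cut pattern, then $\gamma_2$ is obtained from $\gamma_1$ by a product of Dehn twists about the components of $\Gamma''$ that they intersect. The $k$-system constraint $i(\gamma_1, \gamma_2) \le k$, combined with the quadratic-in-intersection-numbers formula for Dehn-twist intersection, bounds the admissible twist exponents by a function of $k$ alone. Balancing the arc count against this bounded multiplicity produces the target exponent $3k-1$.

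Theorem \ref{thm: graph} then gives $|\Gamma| \lesssim \Upsilon(G(\Gamma)) \log \Delta(G(\Gamma))$. Applying the $k$-system arc bound to the collection of curves meeting a single chosen curve (cut $S$ along that curve to obtain a surface of $|\chi| \lesssim g$) yields $\Delta(G(\Gamma)) \lesssim_k g^{C(k)}$, so $\log \Delta(G(\Gamma)) \lesssim_k \log g$. The final bound $|\Gamma| \lesssim_k g^{3k-1} \log g$ follows, with the case of isolated vertices in $G(\Gamma)$ handled by induction on $g$ as in the proof of Theorem \ref{thm: main}. The main obstacle will be calibrating the arc count and the Dehn-twist multiplicity to produce precisely the exponent $3k-1$; both ingredients are elementary in spirit but demand careful bookkeeping with the intersection formula for compositions of Dehn twists under the $k$-system constraint.
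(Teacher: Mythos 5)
Your top-level strategy---combining a $k$-analogue of Lemma~\ref{lem: bound} with Theorem~\ref{thm: graph}---is exactly the paper's, and the handling of the isolated-vertex case and of $\log\Delta$ is fine. The gap is in the key lemma bounding $\Upsilon(G(\Gamma))$, where you take a genuinely different route that does not close.

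You cut $S$ along the resolved curves $\Gamma''$ and let each $\gamma\in U(\Gamma')$ fall apart into at most $k$ arcs, then try to control the multiplicity of the resulting map from curves to (unordered) sets of arc classes via a Dehn-twist argument. For $k\ge 2$ this is substantially harder than the $k=1$ case you are modeling it on: a curve meeting its unique $\Gamma''$-neighbor in up to $k$ points produces up to $k$ arcs whose cyclic pattern of endpoints matters, two curves with the same arc set need not differ by a single twist power, and the connectivity of the cut surface (which the paper establishes crucially for $k=1$ using the single-intersection-point hypothesis) can fail when $k\ge 2$ is even, so the arc system lands on a surface whose topology you haven't controlled. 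None of this is worked out, and it is not clear the bookkeeping terminates; moreover, even granting it, your arcs form a $k$-system, so Przytycki's bound would give an exponent on the order of $k+1$, not $3k-1$---the target exponent does not ``balance out'' to $3k-1$ on your route and you have not explained where it would come from. The paper sidesteps all of this with the puncture-and-slide construction from \cite[Theorem~4.1]{abg2017}: orient each resolved curve $\gamma''\in\Gamma''$, mark a point $p(\gamma'')$, and slide each $\gamma\in U(\Gamma')$ along $\gamma''$ into a \emph{single} arc based at the puncture $p(\gamma'')$. This map is injective, so there is no multiplicity issue at all; the price is that the resulting arcs form a $(3k-2)$-system (the slide can create new crossings), and then Przytycki's arc bound for $(3k-2)$-systems gives exactly $|\chi|^{3k-1}\lesssim g^{3k-1}$. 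That trick---one curve to one arc, at the cost of increasing the intersection bound from $k$ to $3k-2$---is the idea you are missing, and it is what makes the exponent $3k-1$ appear cleanly.
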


(For a function $B$ of several variables including $k$, the notation $A \lesssim_k B$ means that $A \le C(k) \cdot B$ for some function $C(k)$ of $k$ alone.)
By contrast, the largest known construction of a $k$-system of curves on $S_g$ for even values $k$ has size $\gtrsim_k g^{k+1}$ \cite[Remark after Theorem 1.2]{abg2017}.
%By contrast, the largest known construction of a $k$-system of curves on $S_g$ has size $\gtrsim g^{1 + \lfloor (k+1)/2 \rfloor}$ \cite[Theorem 1.2]{aougab14}.
%Here the notation $A \lesssim_k B$ means that $A \le C(k) \cdot B$ for some function $C(k)$.
The proof of Theorem \ref{thm: main 2} is identical to that of Theorem \ref{thm: main}, using the following generalization of Lemma \ref{lem: bound} and \cite[Theorem 1.4]{abg2017}:

%applying the method of \cite{abg2017}.

\begin{lem}
\label{lem: bound 2}
If $\Gamma$ is a $k$-system on $S_g$, then $\Upsilon(G(\Gamma)) \lesssim_k g^{3k-2}$.
\end{lem}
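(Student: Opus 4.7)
The plan is to parallel the proof of Lemma~\ref{lem: bound}, substituting the $1$-system arc bound \cite[Theorem 1]{przytycki2015} with the $k$-system arc bound from \cite[Theorem 1.4]{abg2017}. Given $\Gamma' \subset \Gamma$, I would first resolve all the crossings among curves in $\Gamma'$ to obtain a set of pairwise disjoint curves, let $\Gamma''$ denote those resolved curves that meet some curve in $U(\Gamma')$, and cut $S_g$ along $\Gamma''$ to produce a surface $S'$ with $|\chi(S')| = 2g-2$.

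Each curve $\gamma \in U(\Gamma')$ meets $\Gamma''$ in at most $k$ points, since $\gamma$ meets its unique neighbor $\gamma' \in \Gamma'$ in at most $k$ points, and these are precisely the points at which $\gamma$ is cut. Consequently $\gamma$ cuts open to an ordered cyclic sequence of at most $k$ arcs on $S'$. Arcs arising from distinct curves in $U(\Gamma')$ intersect at most $k$ times, so after discarding duplicate homotopy types the resulting arcs form a $k$-system $A$ on $S'$, and \cite[Theorem 1.4]{abg2017} yields $|A| \lesssim_k g^{3k-2}$.

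The final step is to argue that the passage from curves in $U(\Gamma')$ to their sequence of decomposition arcs on $S'$ is at most $C(k)$-to-one, giving $|U(\Gamma')| \lesssim_k |A| \lesssim_k g^{3k-2}$. The key observation is that two curves $\gamma_1, \gamma_2 \in U(\Gamma')$ giving the same ordered cyclic sequence of arc homotopy classes on $S'$ differ by a composition of Dehn twists $T_{\alpha_1}^{t_1} \cdots T_{\alpha_r}^{t_r}$ about the at most $k$ curves $\alpha_i \in \Gamma''$ meeting $\gamma_1$, generalizing the Dehn twist argument that governs the $1$-system case. The $k$-system constraint $i(\gamma_1, \gamma_2) \le k$ combined with the standard quadratic lower bounds on intersection numbers under compositions of Dehn twists should bound each $|t_i|$ in terms of $k$ and $i(\gamma_1, \alpha_i) \le k$, capping the number of such compositions by a constant depending only on $k$.

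The hardest part is this last multiplicity argument. The Dehn twist curves $\alpha_i$ may have intersected one another before resolution, so the composition is non-commutative and its effect on intersection with $\gamma_1$ is subtle to estimate; further, one must verify that the number of realizable sequences of arc types is $\lesssim_k |A|$ rather than the naive $|A|^k$, which likely also uses the $k$-system property to restrict how arcs glue consistently along $\Gamma''$. Minor bookkeeping is also required to ensure the arc system produced on $S'$ is genuinely essential and nonperipheral so that \cite[Theorem 1.4]{abg2017} applies directly.
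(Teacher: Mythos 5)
Your proposal diverges from the paper's proof in a crucial way, and the points you flag as ``hardest'' are exactly where the argument breaks down. The paper does \emph{not} cut $S_g$ along the resolved curves and track the $\le k$ arcs each $\gamma \in U(\Gamma')$ decomposes into; that route creates the multiplicity and arc-sequence bookkeeping problems you identify, and the naive count gives $|A|^k$ rather than $|A|$, which you cannot resolve with the Dehn-twist heuristic you sketch (the twisting curves interact, and bounded geometric intersection does not straightforwardly cap the twist exponents). The paper sidesteps all of this with a different construction: it resolves \emph{all} crossings among $\Gamma'$, so that $\Gamma''$ has one component per connected component of $\bigcup\Gamma'$ (hence $|\Gamma''| \lesssim g$, since a set of pairwise disjoint non-isotopic representatives has size $\lesssim g$), and crucially each $\gamma \in U(\Gamma')$ meets a \emph{unique} $\gamma'' \in \Gamma''$ (in $\le k$ points). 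Instead of cutting, the paper \emph{punctures} $S_g$ at a single marked point $p(\gamma'')$ on each $\gamma''$, then \emph{slides} each $\gamma$ along its $\gamma''$ to a single arc with both endpoints at $p(\gamma'')$. This yields a bijection $U(\Gamma') \to A$ with $|A| = |U(\Gamma')|$ --- there is nothing $C(k)$-to-one to control --- and, following \cite[Theorem 4.1]{abg2017}, the slid arcs form a $(3k-2)$-system (not a $k$-system: the slide itself introduces extra intersections), to which the Przytycki arc bound \cite[Theorem 1.5]{przytycki2015} applies directly. So the gap in your proposal is not a minor technicality; the multiplicity/arc-sequence argument you defer is the whole difficulty, and the paper's choice of ``puncture and slide'' rather than ``cut'' is precisely the idea that eliminates it.
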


%The proof of Lemma \ref{lem: bound 2} applies the method of .

\begin{proof}
Let $\Gamma' \subset \Gamma$.
%By \cite[Theorem 1.4]{abg2017}, the maximum degree of the intersection graph $G(\Gamma)$ is $\Delta \lesssim_k g^{3k-2}$.
%Locate a subset of curves $\Gamma' \subset \Gamma$ such that $|U(\Gamma')| \gtrsim n / \log \Delta \gtrsim_k n / \log g$ by Theorem \ref{thm: graph}.
Let $C' \subset S_g$ denote the union of the curves in $\Gamma'$.
Select one curve in $\Gamma'$ from each component of $C'$.
These curves are pairwise disjoint, and no two are isotopic.
Therefore, the number of these curves, and hence the number of components of $C'$, is $\lesssim g$.
By induction on the number of intersection points between the curves in $\Gamma'$, we can resolve the intersection points of $C'$ to obtain a collection $\Gamma''$ of simple closed curves on $S_g$ with $|\Gamma''| = |\pi_0(C')| \lesssim g$.
%In this way, we guarantee that each curve in $U(\Gamma')$ meets just one curve $\Gamma''$.
Orient each curve $\gamma'' \in \Gamma''$ and place a point $p(\gamma'')$ on it disjoint from the rest of $\Gamma$.
Each curve $\gamma \in U(\Gamma')$ meets a unique curve $\gamma'' \in \Gamma''$.
Locate the unique point of intersection $p \in \gamma \cap \gamma''$ with the property that the oriented arc $\alpha \subset \gamma''$ from $p$ to $p(\gamma'')$ is disjoint from $\gamma$.
Slide $\gamma$ along $a$ so as to produce an arc with endpoints at $p(\gamma'')$.
Doing so for each $\gamma \in U(\Gamma)$ results in a set $A$ of $|U(\Gamma)|$ arcs on the surface $S'$ obtained by puncturing $S_g$ at all of the points $p(\gamma'')$, $\gamma'' \in \Gamma''$.
Since there are $\lesssim g$ punctures, we have $|\chi(S')| \lesssim g$.
As in the proof of \cite[Theorem 4.1]{abg2017}, $A$ forms a $(3k-2)$-system on $S'$.
By \cite[Theorem 1.5]{przytycki2015}, $|U(\Gamma')| = |A| \lesssim_k |\chi(S')|^{3k-1} \lesssim g^{3k-1}$.
The resulting bound on $\Upsilon(G(\Gamma))$ now follows.
%Therefore, $n / \log g \lesssim_k g^{3k-1}$, which gives the desired bound.
\end{proof}

If one knew that $\Upsilon(G(\Gamma)) \lesssim |\Gamma|$ for any 1-system $\Gamma$, then the proof of Theorem \ref{thm: main} would yield the estimate $|\Gamma| \lesssim g^2$.
%While this may be the case, we cannot prove it.
%Moreover, t
The following result indicates that this is {\em not} the case for arbitrary finite, simple graphs:
%, and therefore indicates the limitation of our approach:
%On the other hand, Theorem \ref{thm: graph} admits a contrast which illustrates the limitation of our approach:

\begin{thm}
\label{thm: graph 2}
For all $n, \Delta > 0$ such that $n \gtrsim \Delta \log \Delta$, there exists a simple graph $G=(V,E)$ with $\sim n$ vertices, no isolated vertices, and maximum degree $\sim \Delta$ such that $\Upsilon(G) \lesssim n \log \log \Delta / \log \Delta$.
%for every subset $V_0 \subset V$, we have $|U(V_0)| \lesssim n \log \log \Delta / \log \Delta$.
\end{thm}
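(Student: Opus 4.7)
The plan is to give a probabilistic construction based on a random blow-up of an auxiliary graph, tuned so that a block structure exposes enough symmetry to restrict the adversary's effective choices of $V'$ to roughly $\exp(n\log\log\Delta/\log\Delta)$ options, thereby matching the $\log\log\Delta$ slack in the theorem.

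First I would set block size $b = \lfloor\log\Delta\rfloor$ and $m = \lfloor n/b\rfloor$, then sample a random $t$-regular graph $H$ on $[m]$ with $t \sim \Delta/b$. Partition $V = B_1 \sqcup \cdots \sqcup B_m$ into blocks of size $b$, and for each edge $ij \in E(H)$, place a random bipartite $k$-regular graph between $B_i$ and $B_j$, with $k \leq b$ tuned so that every vertex of $G$ has degree $\sim\Delta$. Standard properties of the configuration model ensure that $G$ has $\sim n$ vertices, maximum degree $\sim\Delta$, and (since $H$ is connected with high probability) no isolated vertices.

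Next I would bound $\Upsilon(G)$ by first recording, for each $V'\subset V$, the block profile $(a_j)_{j=1}^m$ with $a_j = |V'\cap B_j| \in \{0,\ldots,b\}$. There are at most $(b+1)^m$ profiles, with logarithm $\sim n\log\log\Delta/\log\Delta$. For a fixed profile, the random variable $|N(v)\cap V'|$ for $v \in B_i$ is approximately a sum of independent hypergeometric counts over the $H$-neighbors of $i$, so a Chernoff-type estimate pins down $|U(V')|$ tightly around its expectation conditioned on the profile. A first-moment computation along the lines of the proof of Theorem~\ref{thm: graph} shows that this expectation is $\lesssim n\log\log\Delta/\log\Delta$ uniformly in the profile, provided $b, k, t$ are chosen appropriately; a union bound over profiles then extracts a deterministic graph $G$ with the desired property.

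The main obstacle is the delicate balance between the Chernoff concentration bound, which must save a factor of at least $\exp(-Cn\log\log\Delta/\log\Delta)$ per profile, and the union bound over profiles whose log-count is of exactly the same order. The argument is therefore tight: the quantization of the adversary's choices into $(b+1)^m$ profile types is precisely what produces the $\log\log\Delta$ factor in the final bound, and the rest of the analysis is about verifying that each profile actually saturates this budget. Executing the concentration argument requires handling the mild dependencies arising from the random matchings inside the edges of $H$, for instance via a switching or martingale argument adapted to the configuration model.
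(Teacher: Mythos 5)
There is a fatal flaw in the construction before the analysis even begins. Your graph is tuned so that \emph{every} vertex has degree $\sim \Delta$, i.e.\ $G$ is nearly regular. But for any graph in which a constant fraction of vertices have degree within a constant factor of a common value $d$, one has $\Upsilon(G) \gtrsim n$: simply include each vertex in $V'$ independently with probability $p = 1/d$, and observe (exactly as in the proof of Theorem~\ref{thm: graph}) that for each such vertex $v$,
\[
\bP(v \in U(V')) = d(v)\, p\, (1-p)^{d(v)-1} \asymp e^{-1},
\]
so $\bE|U(V')| \gtrsim n$, and hence some $V'$ achieves $|U(V')| \gtrsim n$. This is far larger than the target $n \log\log\Delta/\log\Delta$, so a nearly regular graph can never serve as the construction, regardless of how cleverly one analyzes it.

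The real content of the theorem is to produce a graph whose vertex degrees are \emph{spread out} over roughly $\log\Delta$ dyadic scales in such a way that no single sampling density $p$ lands near $1/d(v)$ for more than a tiny fraction of vertices. The paper does this by taking $k \sim \log\Delta$ blocks $V_1,\dots,V_k$ of equal size and connecting $V_i$ to $V_j$ by a bipartite Ramanujan graph of degree $2^{i+j}$, so vertices in $V_j$ have degree $\sim 2^{j+k}$; for any fixed $V'$, only about $\log k \sim \log\log\Delta$ consecutive layers can contribute near-maximally to $U(V')$, and the expander mixing lemma kills the contribution of the remaining layers. Your profile-quantization and Chernoff-plus-union-bound strategy is also problematic in principle — for the ``good'' profile the event $v \in U(V')$ has probability $\Theta(1)$, so there is no exponentially small failure probability to union-bound against — but this is secondary to the regularity issue, which rules out the construction outright. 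If you want to rescue the approach, you would need to abandon uniform degrees and build in the geometric degree spread that is the engine of the actual proof.
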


Here the notation $\sim$ means equality to within a factor of 2.
%Our purpose is to highlight that an estimate $|\Gamma| \lesssim g^2$ on the size of a 1-system $\Gamma$ must involve more than an improvement on Theorem \ref{thm: main}.
The construction of the examples of Theorem \ref{thm: graph 2} is inspired by the proof of Theorem \ref{thm: graph}.
We search for graphs whose vertex degrees are equidistributed on a logarithmic scale, so that all of the sets $V_j$ appearing in that proof have the same cardinality.
The heuristic for constructing the graphs is to take as $V_j$ a set of $t$ vertices, each connected to $2^j$ half-edges, for $j=1,\dots,k$.
We then join the ends of these half-edges at random into edges to form a graph $G$.
I thank Larry Guth for suggesting this construction and sketching why $\Upsilon(G)$ should behave like $n / \log \Delta$.
The actual construction we describe in the proof is based on the existence of a family of expander graphs.
We suspect that with care, the factor of $ \log \log \Delta$ can be removed from the statement of Theorem \ref{thm: graph 2}.
%, but we do not pursue it here.

%For the following proof, we introduce one more piece of notation: for a pair of subsets $A, B \subset V$ in a graph $G=(V,E)$, let $E(A,B) \subset E$ denote the subset of edges with one endpoint in $A$ and one endpoint in $B$.

\begin{proof}
Set $k = \frac 12 \lfloor \log_2 \Delta \rfloor$, 
%$m = \lfloor \log_2(n/2k) \rfloor$, and $t = 2^m$.
 and let $t$ denote the smallest power of 2 that is greater than or equal to $n/2k$.
%Assuming that $n \ge \Delta \log_2 \Delta$, we have $t \ge 2^{2k}$.
%$t = 2\lfloor n/2k \rfloor$.z
Let $V_1,\dots,V_k$ denote disjoint sets of cardinality $t$ apiece.
For each pair of distinct indices $1 \le i , j \le k$, let $G_{ij}$ denote a bipartite Ramanujan graph with bipartition $V_i \sqcup V_j$ in which each vertex has degree $d_{ij} = 2^{i+j}$.
For each index $1 \le i \le k$, let $G_{ii}$ denote a bipartite Ramanujan graph on the vertex set $V_i$ in which each vertex has degree $d_{ii} = 2^{2i}$.
The existence of such graphs follows from \cite[Theorem 5.5]{mss2015}.
The only condition we need to ensure is that $t \ge d_{ij}$ for all $i,j$, and this follows on the assumption that $n \ge \Delta \log_2 \Delta$.
%Observe that $t$ is chosen to be even for the case that $i=j$, and that $d_{ij} \le 2^{2k} < t$ by the condition that $n > \Delta \log \Delta$. (more conditions?)
The key feature of $G_{ij}$ that we shall require is that for every pair of subsets $A \subset V_i$ and $B \subset V_j$, we have
\begin{equation}
\label{eq: expander}
| |E(A,B)| - d_{ij} |A| |B| / t | \le 2 (d_{ij} |A| |B|)^{1/2}.
\end{equation}
Here $E(A,B) \subset E(G_{ij})$ denotes the subset of edges with one endpoint in $A$ and one endpoint in $B$.
Inequality \eqref{eq: expander} follows by inserting the defining condition on the second eigenvalue of a Ramanujan graph into \cite[Corollary 9.2.5]{asbook}.
Finally, let $G$ be the union of all of the graphs $G_{ij}$ on the vertex set $V = V_1 \sqcup \cdots \sqcup V_k$.
Observe that $G$ has $kt \sim n$ vertices and maximum degree $2^{k+1} + 2^{k+2} + \cdots + 2^{k + k} \sim \Delta$.
% we need to ensure that $2^{k + k - 1} \le t$

We now argue that $\Upsilon(G) \lesssim n \log \log \Delta / \log \Delta$.
%Now consider an arbitrary subset $V' \subset V$.
%Set $V_j' = V \cap V_j$ and $U_j' = V \cap U(V')$ for $j=1,\dots,k$.
%We have
%\begin{equation}
%\label{eq: edge count}
%|U(V')| = |E(V',U(V'))| = \sum_{1 \le i,j \le k} |E(V_i',U_j')|.
%\end{equation}
%Call a pair $(i,j)$ {\em substantial} if $2(d_{ij} |V_i'| |U_j'|)^{1/2} \le \frac 12 d_{ij} |V_i'| |U_j'| / t$ and {\em meager} otherwise.
%Let $S$ denote the set of substantial pairs and $M$ the set of meager pairs.
%We break the right side of \eqref{eq: edge count} into the sum over substantial pairs and meager pairs and consider each contribution in turn.
%First, we have
%\[
%\sum_{(i,j) \in S} |E(V_i',U_j')| \ge \frac 12 \sum_{(i,j) \in S} d_{ij} |V_i'| |U_j'| / t.
%\]
%For a meager pair $(i,j)$, we have $d_{ij} |V_i'| |U_j'| \ge 16 t^2$.
Consider an arbitrary subset $V' \subset V$.
Write
\[
U = U(V'), \quad U_j = U \cap V_j,  \quad V_j' = V' \cap V_j, \quad j=1,\dots,k.
\]
We wish to bound $|U| = \sum_{j=1}^k |U_j|$.
We do so by breaking the sum into three parts, as follows.
Let $j_0$ denote the smallest index, if it exists, with the property that $|E(V',V_{j_0})| \ge t$, and set $j_0 = \infty$ otherwise.
By construction, $|E(V',V_{j+1})| = 2 |E(V',V_j)|$ for all $j$.
The union of the $V_j$ with $j \ll j_0$ will contain few neighbors of $V'$ and so few ($\lesssim t$) elements of $U$.
The union of the $V_j$ with $j \approx j_0$ may contain many ($\lesssim t \log k$) elements of $U$.
Finally, the expansion property \eqref{eq: expander} will show that the union of $V_j$ for $j \gg j_0$ will contain few ($\lesssim t$) elements of $U$.
Together, these bounds lead to the desired bound on $|U|$.

We now rigorize this heuristic.
First, we have
\begin{equation}
\label{eq: u bound 1}
\sum_{j < j_0} |U_j| = \sum_{j < j_0} |E(V',U_j)| \le \sum_{j < j_0} |E(V',V_j)| < (\cdots + \frac 14 + \frac 12 + 1) t \lesssim t.
\end{equation}
In particular, if $j_0 = \infty$, then we reach the desirable conclusion that $|U| \lesssim t \lesssim n / \log \Delta$.
Thus, we may assume that $j_0$ is finite.

By the pigeonhole principle, there exists an index $i$ with the property that $|E(V_i',V_{j_0})| \ge t/k$.
For this index $i$ and for all $j$, we have
\begin{equation}
\label{eq: geometric}
d_{ij} |V_i'| = |E(V_i',V_j)| = 2^{j-j_0} |E(V_i',V_{j_0})|  \ge 2^{j-j_0} t / k.
\end{equation}

Suppose that $2 (d_{ij} |V_i'||U_j|)^{1/2} < \frac 12 d_{ij} |V_i'| |U_j| / t$ for some value $j \ge j_0$.
By a trivial estimate, the expansion property \eqref{eq: expander}, and \eqref{eq: geometric}, we have
\[
|U_j| \ge |E(V_i',U_j)| > \frac 12 d_{ij} |V_i'| |U_j| / t \ge 2^{j-j_0 -1} |U_j| / k.
\]
%On the other hand, we have the trivial estimate $|E(V_i',U_j)| \le |U_j|$.
We thereby obtain $k > 2^{j-j_0 -1}$, so $j < j_0 + \log_2 k + 1$ in this case.
We apply the trivial bound
\begin{equation}
\label{eq: u bound 2}
\sum_{j=j_0}^{j_0 + \log_2 k} |U_j| \lesssim t \log k
\end{equation}
for these values $j$.
%$j_0 \le j \le j_0 + \log_2 k$.

If instead $j \ge j_0 + \log_2 k + 1$, then it follows that $2 (d_{ij} |V_i'||U_j|)^{1/2} \ge \frac 12 d_{ij} |V_i'| |U_j| / t$.
Rearranging this inequality and invoking \eqref{eq: geometric} once more, we obtain $|U_j| < 16 t^2 / (d_{ij} |V_i'|) \lesssim tk 2^{j_0-j}$.
Consequently,
\begin{equation}
\label{eq: u bound 3}
\sum_{j \ge j_0 + \log_2 k + 1} |U_j| \lesssim t k \sum_{j \ge j_0 + \log_2 k + 1} 2^{j_0-j} \lesssim t.
\end{equation}

Combining \eqref{eq: u bound 1},  \eqref{eq: u bound 2}, and \eqref{eq: u bound 3}, we obtain $|U| \lesssim t \log k \lesssim n \log \log \Delta / \log \Delta$, as desired.
\end{proof}

\section*{Acknowledgements}

I thank Ravi Boppana, Jonah Gaster, and Larry Guth for fun and helpful conversations.
In particular, Larry laid the groundwork for constructing the ``enemy graphs" of Theorem \ref{thm: graph 2}.
This work was supported by NSF CAREER Award DMS-1455132.

\bibliographystyle{myalpha}
\bibliography{/Users/JoshuaGreene/Dropbox/Papers/References}

\providecommand{\bysame}{\leavevmode\hbox to3em{\hrulefill}\thinspace}
\providecommand{\MR}{\relax\ifhmode\unskip\space\fi MR }
% \MRhref is called by the amsart/book/proc definition of \MR.
\providecommand{\MRhref}[2]{%
  \href{http://www.ams.org/mathscinet-getitem?mr=#1}{#2}
}
\providecommand{\href}[2]{#2}
\begin{thebibliography}{MRT14}

\bibitem[ABG17]{abg2017}
Tarik Aougab, Ian Biringer, and Jonah Gaster, \emph{Packing curves on surfaces
  with few intersections}, International Mathematics Research Notices (2017),
  rnx270.

\bibitem[Aou14]{aougab14}
Tarik Aougab, \emph{Constructing large {$k$}-systems on surfaces}, Topology
  Appl. \textbf{176} (2014), 1--9.

\bibitem[AS]{asbook}
Noga Alon and Joel~H. Spencer, \emph{The probabilistic method}, third ed.,
  Wiley-Interscience Series in Discrete Mathematics and Optimization.

\bibitem[JMM96]{jmm96}
M.~Juvan, A.~Malni{\v c}, and B.~Mohar, \emph{Systems of curves on surfaces},
  J. Combin. Theory Ser. B \textbf{68} (1996), no.~1, 7--22.

\bibitem[MRT14]{mrt14}
Justin Malestein, Igor Rivin, and Louis Theran, \emph{Topological designs},
  Geom. Dedicata \textbf{168} (2014), 221--233.

\bibitem[MSS15]{mss2015}
Adam~W. Marcus, Daniel~A. Spielman, and Nikhil Srivastava, \emph{Interlacing
  families {I}: {B}ipartite {R}amanujan graphs of all degrees}, Ann. of Math.
  (2) \textbf{182} (2015), no.~1, 307--325.

\bibitem[Prz15]{przytycki2015}
Piotr Przytycki, \emph{Arcs intersecting at most once}, Geom. Funct. Anal.
  \textbf{25} (2015), no.~2, 658--670.

\end{thebibliography}
%\bibliography{/Users/greenegh/Dropbox/Papers/References}
%%\bibliography{Dropbox/Papers/References}

\end{document}